\newfont{\bb}{msbm10 at 11pt}
\newfont{\bbsmall}{msbm8 at 8pt}
\newcommand{\h}{\widehat}
\newcommand{\wt}{\widetilde}
\newcommand{\R}{\mbox{\bb R}}
\newcommand{\N}{\mbox{\bb N}}
\newcommand{\HH}{\mbox{\bb H}}
\newcommand{\Nsmall}{\mbox{\bbsmall N}}
\newcommand{\rth}{\R^3}
\newcommand{\ben}{\begin{enumerate}}
\newcommand{\bit}{\begin{itemize}}
\newcommand{\een}{\end{enumerate}}
\newcommand{\eit}{\end{itemize}}
\def\a{{\alpha}}
\def\lc{{\cal L}}
\def\g{{\gamma}}
\def\de{{\delta}}
\def\ve{{\varepsilon}}
\def\centerbmp#1#2#3{\vskip#2\relax\centerline{\hbox to#1{\special
    {bmp:#3 x=#1, y=#2}\hfil}}}
\newtheorem{theorem}{Theorem}[section]
\newtheorem{lemma}[theorem]{Lemma}
\newtheorem{proposition}[theorem]{Proposition}
\newtheorem{remark}[theorem]{Remark}
\newtheorem{corollary}[theorem]{Corollary}
\newtheorem{definition}[theorem]{Definition}
\newtheorem{assertion}[theorem]{Assertion}
\newenvironment{proof}{\smallskip\noindent{\it Proof.}\hskip \labelsep}
                          {\hfill\penalty10000\raisebox{-.09em}{$\Box$}\par\medskip}
\begin{document}
\begin{title}
{Existence of regular neighborhoods for $H$-surfaces }
\end{title}
\begin{author}
{William H. Meeks, III\thanks{ This material is based upon
 work for the NSF under Award No. DMS -
 0703213. Any opinions, findings, and conclusions or recommendations
 expressed in this publication are those of the authors and do not
 necessarily reflect the views of the NSF.} \and Giuseppe Tinaglia\thanks{Partially
supported by The Leverhulme Trust.}}
\end{author}
\maketitle

\begin{abstract} In this paper, we study the global geometry of
complete,  constant mean curvature hypersurfaces embedded in
$n$-manifolds. More precisely, we give conditions that imply properness
of such surfaces and prove the existence of fixed size one-sided regular neighborhoods for certain constant mean curvature hypersurfaces in certain $n$-manifolds.

\vspace{.3cm}

\noindent{\it Mathematics Subject Classification:} Primary 53A10,
   Secondary 49Q05, 53C42

\noindent{\it Key words and phrases:}
Minimal surface, constant mean curvature, regular neighborhood, properly embedded.
\end{abstract}

\section{Introduction.}

In this paper we present some useful results on the geometry of a complete $H$-hypersurface $M$ in an $n$-manifold $N$, where by $H$-hypersurface we mean that $M$ is an {\it embedded} (injectively immersed) hypersurface of constant mean curvature $H\geq 0$. When $H>0$, $N$ has bounded absolute sectional curvature and $M$ is connected, has bounded second fundamental form and it is proper and separating in $N$, then we prove that $M$ has a fixed size regular neighborhood on its mean convex side (see Remark~\ref{remark} and Theorem~\ref{cor*}). This result is useful for obtaining local $(n-1)$-dimensional volume estimates for such an $M$ in terms of local volume estimates in $N$. This existence of a fixed size one-sided neighborhood for certain $H$-surfaces $M$ is a cornerstone in proving the $CMC$ Dynamics and Minimal Elements Theorems in~\cite{mt4}, and more generally, the $CMC$ Dynamics and Minimal Elements Theorems in other homogenous $n$-manifolds such as $\R^n$ and hyperbolic $n$-space $\HH^n$~\cite{mt5}.
%See~\cite{mt1} and~\cite{mt7} for applications of the $CMC$ Dynamics and Minimal Elements Theorems in $\rth$ in the derivation of curvature estimates for certain complete $H$-surfaces in three-manifolds.
Theorem~\ref{cor*} also plays a key role in the classification of $H$-spheres in the 3-dimensional solvable group ${\rm Sol}_3$ with respect to its usual homogeneous structure; see \cite{me34}.

In Proposition~\ref{proper}, we demonstrate that complete embedded $H$-hypersurfaces $M$ with locally bounded second fundamental form in certain complete $n$-manifolds $N$ are properly embedded. For example, when $n=3$ this proposition implies that if for some $\ve>0$, $M$ and $N$ satisfy  $H^2\geq S_N+\ve$, where $S_N$ is the average sectional curvature (scalar curvature) of $N$, then $M$ is properly embedded in $N$; this special case of Proposition~\ref{proper} was found independently by Harold Rosenberg and this case is essentially the statement of Theorem~1.1 in~\cite{rose5} (also see \cite{mpr19}). If $N=\HH^3$ and $H>1$, it then follows that such an $M$ is properly embedded in $\HH^3$; see \cite{mt11} for the same properness result for connected, embedded $H=1$ surfaces in $\HH^3$.

%In~\cite {mt11} we apply this result to obtain the following sharp result: If $M$ is an $H$-surface of finite topology in a complete hyperbolic three-manifold $N$, then $M$ is properly embedded in $N$ if $H\geq 1$. On the other hand we prove in~\cite{mt11} that for any non-negative $H$ less then 1, there exists a complete simply-connected surface $M_H$ of constant mean curvature $H$ embedded in $\HH^3$ which is not properly embedded.

Finally, in the last section we observe how using results by Grosse-Brauckmann in~\cite{gb1} make it possible to construct  examples of complete disconnected $H$-surfaces in a fixed horizontal slab in $\rth$, each of  which is properly embedded in the open slab but not properly embedded in $\rth$.

\section {Background on $H$-laminations.}\label{sec2}

In order to help understand the results described in this paper, we
make the following definitions. For further discussion on the
general theory of weak $H$-laminations, whose definition appears
below, and related $CMC$ foliations, see the survey~\cite{mpr19}.
\begin{definition} {\rm
Let $M$ be a complete surface embedded  in a three-manifold $N$. A
point $p\in N$ is a {\it limit point} of $M$ if there exists a
sequence of points $\{p_n\}_n\subset M$ which diverges to infinity
on $M$ with respect to the intrinsic Riemannian topology on $M$ but
converges in $N$ to $p$ as $n\to \infty$. Let $L(M)$ denote the set
of all limit points of $M$ in $N$. In particular, $L(M)$ is a closed
subset of $N$ and $\overline{M} -M \subset L(M)$, where
$\overline{M}$ denotes the closure of $M$ in $N$.}
\end{definition}

\begin{definition}\label{definition}
{\rm A {\em weak} $H$-{\em lamination} $\cal{L}$ of a three-manifold $N$
is a collection of immersed surfaces $\{L_\alpha\}_{\alpha\in I}$ of
constant positive mean curvature $H$ called {\em leaves} of
$\cal{L}$ satisfying the following properties.
\begin{enumerate}
\item ${\cal L}=\bigcup_{\alpha\in I} L_\alpha $ is a closed subset of
$N$.
\item For each leaf $L_\alpha$ of $\cal{L}$, considered to be the zero section $Z_\alpha$
of its normal bundle $N_b(L_\alpha)$, there exists a closed one-sided
neighborhood $\Delta(Z_\alpha)\subset N_b(L_\alpha)$ of $Z_\alpha$ such that:
\begin{enumerate}
\item The exponential map $\exp \colon \Delta(Z_\alpha) \rightarrow N$ is a
submersion.
\item With respect to the pull-back metric on $\Delta(Z_\alpha)$,
$Z_\alpha \subset \partial \Delta(Z_\alpha)$ is mean convex.
\item The inverse image $\exp^{-1}({\cal L})\cap \Delta(Z_\alpha)$ is a lamination of
$\Delta(Z_\alpha)$.

\end{enumerate}
\end{enumerate}\noindent When $H=0$,  by the maximum principle, each leaf of ${\cal L}$ is embedded; thus $\lc$ is an actual lamination and we call $\lc$ a {\it minimal}
lamination. When $H>0$, the maximum principle implies that each non-embedded leaf of a weak $H$-lamination is almost-embedded in the following sense.}
\end{definition}

\begin{definition}{\rm
We call an immersed hypersurface in an $n$ manifold $N$, which is the image of an immersion $f\colon M\to N$, {\em almost-embedded} if whenever  $p,q\in M$ and $f(p)=f(q)$, then there exist disjoint neighborhoods $U_p, U_q$ such that $f(U_p)$ and $f(U_q)$ lie locally on opposite sides of $f(p)$ and $f$ is injective on an open dense set of $M$. }

\end{definition}

The reader not familiar with the subject of minimal or weak
$H$-laminations should think about a geodesic on a Riemannian
surface. If the geodesic is complete and embedded (a one-to-one
immersion), then its closure is a geodesic lamination of the
surface. When this geodesic has no accumulation points, then it is
proper. Otherwise, there pass complete embedded geodesics through
the accumulation points forming the leaves of the geodesic
lamination of the surface. The similar result is true for a
complete $H$-surface of locally  bounded second
fundamental form (curvature is bounded in compact extrinsic balls)
embedded in a Riemannian three-manifold, i.e., the closure of a complete,
embedded $H$-surface of locally bounded second fundamental form has
the structure of a weak $H$-lamination. For the sake of
completeness, we now sketch the proof of this elementary fact for $H$-surfaces in 3-manifolds,
see~\cite{mr8} for the proof in the minimal case.

Consider a complete $H$-surface $M$ of locally bounded
second fundamental form embedded in a three-manifold $N$. Consider a limit
point $p$ of $M$, which is a limit of a sequence of divergent
points $p_n$ in $M$. Since $M$ has bounded curvature near $p$ and
$M$ is embedded, then for some small $\ve >0$, a subsequence of the
intrinsic disks $B_M(p_n, \ve)$ converges $C^2$ to an embedded
$H$-disk $D(p,\ve)\subset N$ of intrinsic radius $\ve$, centered at
$p$ and of constant mean curvature $H$. Since $M$ is embedded, any
two such limit disks, say $D(p,\ve)$, $D'(p,\ve)$, do not intersect
transversally (in fact, two such intersecting disks lie
locally on one side of each other). By  the maximum principle for $H$-surfaces, we
conclude that if a second disk $D'(p,\ve)$ exists, then $D(p,\ve)$,
$D'(p,\ve)$ are the only such limit disks and they are oppositely
oriented at $p$.

Now consider any sequence of embedded disks $E_n$ of the form
$B_M(q_n,\frac{\ve}{4})$ or $D(q_n,\frac{\ve}{4})$ such that $q_n$
converges to a point in $D(p,\frac{\ve}{2})$ and such that $E_n$
locally lies on the mean convex side of $D(p,\ve)$. For $\ve$
sufficiently small and for $n$, $m$ large, $E_n$ and $E_m$ must be
graphs over domains in $D(p,\ve)$ such that when oriented as graphs,
they have the same mean curvature (see also the
proof of Lemma~\ref{thm1}). By the maximum principle,
the graphs $E_n$ and $E_m$ are disjoint. It follows that near $p$
and on the mean convex side of $D(p,\ve)$, $\overline{M}$ has the
structure of a lamination with leaves with constant mean curvature
$H$. This proves that $\overline{M}$ has the structure of a weak
$H$-lamination.

%We now prove our first result on the properness of certain $H$-hypersurfaces in certain $n$-manifolds.
The next proposition is a simple consequence of the fact that the closure $\overline{M}$ of a complete, $H$-hypersurface $M$ of locally bounded second fundamental form in a complete $n$-manifold $N$ has the structure of a weak  $H$-lamination and the fact that if $M$ is not proper in $N$, then $\overline{M}$ contains a limit leaf $L$ which is stable by the main theorem in~\cite{mpr18}.

\begin{proposition} \label{proper} Suppose $M$ is a complete connected $H$-hypersurface with locally bounded second fundamental form in a complete $n$-manifold $N$. If the only complete, stable, almost-embedded, $H$-hypersurfaces in $N$ are compact with finite fundamental group, then $M$ is properly embedded in $N$.
\end{proposition}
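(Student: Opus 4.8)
The plan is to argue by contradiction using the two facts highlighted just before the statement. First I would suppose that $M$ is complete, connected, has locally bounded second fundamental form, but is \emph{not} properly embedded in $N$. By the discussion in this section (the analogue of the 3-manifold argument, applied in the $n$-manifold setting), the closure $\overline{M}$ has the structure of a weak $H$-lamination $\lc$ of $N$, whose leaves are complete almost-embedded $H$-hypersurfaces. Since $M$ is not proper, the set of limit points $L(M)$ is nonempty and the lamination $\lc$ contains at least one leaf $L$ that lies entirely in $L(M)$, i.e.\ a \emph{limit leaf}. By the main theorem of~\cite{mpr18}, every limit leaf of a weak $H$-lamination is stable; hence $L$ is a complete, stable, almost-embedded $H$-hypersurface in $N$.

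Next I would invoke the hypothesis: the only complete, stable, almost-embedded $H$-hypersurfaces in $N$ are compact with finite fundamental group. Therefore $L$ is compact. Now I want to derive a contradiction from the existence of a compact limit leaf. The idea is that a compact leaf of a (weak) $H$-lamination forces $\overline{M}$ to contain a leaf that is compact, and compactness of $L$ together with the lamination structure means a neighborhood of $L$ in $\overline{M}$ is a product-like family of nearby leaves; pushing this to the universal cover (which is a finite cover of $L$, since $\pi_1(L)$ is finite) and using that nearby leaves on the mean convex side also have constant mean curvature $H$, one can run a standard maximum-principle / flux argument to see that $M$ itself must be one of these compact leaves, contradicting the assumption that $M$ is not proper (a compact embedded hypersurface is automatically proper). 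More precisely: since $L\subset L(M)$ is compact and $M$ is connected with $M\subset \overline{M}=\lc$, either $M=L$ (impossible, as then $M$ would be proper) or $M$ accumulates on $L$; in the latter case one gets a sequence of disjoint nearby compact leaves converging to $L$, contradicting that the leaves near $L$ form a lamination with the mean convexity normalization in part (2b) of Definition~\ref{definition} — two such compact $H$-leaves on the mean convex side would have to coincide by the maximum principle.

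The step I expect to be the main obstacle is this last one: carefully showing that a compact limit leaf $L$ cannot be a genuine limit leaf of $\overline{M}$, i.e.\ ruling out infinitely many compact leaves accumulating on $L$. The clean way to handle it is to pass to the pulled-back lamination $\exp^{-1}(\lc)\cap \Delta(Z_L)$ on the one-sided neighborhood $\Delta(Z_L)$ of the zero section of the normal bundle of $L$ (using part (2) of the weak $H$-lamination definition), observe that $Z_L$ is a compact leaf of this ordinary lamination that is a limit leaf, and then apply the maximum principle for $H$-hypersurfaces: consecutive leaves of the induced lamination near $Z_L$ are graphs over $L$ with the same mean curvature $H$ and oriented consistently, so by the strong maximum principle they cannot be distinct and limit onto $Z_L$. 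This contradiction shows $M$ is proper. I would also note, for completeness, that connectedness of $M$ is used only to guarantee that if $M$ meets a compact leaf $L$ of $\lc$ then $M=L$; without it one could have $M$ a disjoint union, one component of which is $L$, which is why the slab examples in the last section require $M$ disconnected.
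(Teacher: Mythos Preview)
Your setup is correct and matches the paper: assume $M$ is not proper, pass to the weak $H$-lamination $\overline{M}$, extract a limit leaf $L$, apply~\cite{mpr18} to get stability, and invoke the hypothesis to conclude $L$ is compact with finite $\pi_1$. The gap is in how you extract the contradiction.

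Your proposed contradiction is that, by the strong maximum principle, two distinct compact $H$-graphs over $L$ on the mean convex side ``would have to coincide,'' so no sequence of compact leaves can accumulate on $L$. This is not what the maximum principle says. The maximum principle only forces two consistently oriented $H$-hypersurfaces to coincide if they \emph{touch}; it does not prevent a sequence of pairwise disjoint compact $H$-leaves from accumulating on $L$. Indeed, in a suitable warped product $S^{n-1}\times\mathbb{R}$ one can arrange infinitely many disjoint CMC slices with the same $H>0$ accumulating on a limit slice, so the phenomenon you are trying to rule out actually occurs. The lamination structure on the mean convex side in Definition~\ref{definition} explicitly \emph{allows} many leaves; it does not force uniqueness.

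The paper's argument obtains the contradiction differently, and this is where the hypotheses ``finite fundamental group'' and ``$M$ connected'' really enter. One passes to the finite universal cover $\widetilde{L}$ and pulls back the lamination to a one-sided neighborhood $\Delta(\widetilde{L})$ in the normal bundle. Because $\widetilde{L}$ is \emph{simply-connected} and compact, a monodromy (holonomy) argument shows that leaves of the pulled-back lamination sufficiently close to $\widetilde{L}$ are themselves compact and diffeomorphic to $\widetilde{L}$. Their images in $N$ are then compact leaves of $\overline{M}$; since $L$ is a limit leaf of $M$, infinitely many of these nearby compact leaves lie in $M$ itself and are therefore connected components of $M$. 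This contradicts connectedness of $M$. So connectedness is not used merely to say ``if $M$ meets $L$ then $M=L$''; it is used to rule out $M$ having infinitely many compact components accumulating on $L$. You should replace the maximum-principle step by this monodromy-plus-connectedness argument. (A minor additional point: the paper first reduces to the $2$-sided case by passing to a double cover of $N$, which you omit.)
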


\begin{proof} After possibly lifting to a 2-sheeted cover of $N$, we may assume that $M$ is 2-sided. If $M$ is not proper in $N$, then ${\cal L}=\overline{M}$ has the structure of a weak $H$-lamination with a limit leaf $L$. By the Theorem 1 in~\cite{mpr18}, $L$ is stable and since it is almost-embedded, our hypotheses imply that $L$ is compact with finite fundamental group.

Let $\Pi\colon \wt{L}\to L$ denote the finite sheeted universal cover of $L$ and consider $\wt{L}$ to be the zero section of its normal bundle $N_b(\wt{L})$. For some small $\ve>0$, the $\ve$-neighborhood $\Delta(\wt{L})$ in $N_b(\wt{L})$ submerses to its image in $N$ under the exponential map. With respect to the pulled back lamination $\wt{\cal L}=(\exp|_{\Delta(\wt{L})})^{-1}({\cal L})$, we obtain a lamination of $\Delta(\wt{L})$ with the compact simply-connected hypersurface $\wt{L}$ as a limit leaf. By a monodromy argument, the leaves in $\wt{\cal L}$ close to $\wt{L}$ are also compact and naturally diffeomorphic to $\wt{L}$, and there is a sequence of them which converges to $\wt{L}$. Hence, the images or projections to $N$ under the exponential map of these compact leaves correspond to components of $M$, which contradicts our assumption that $M$ is connected. This contradiction completes the proof of the proposition.
\end{proof}

The importance of the above proposition is that in certain cases, the hypotheses in the statement of it follow from simple geometric hypotheses. In the next corollary we list some of these cases.  Item 1 in the next corollary was found independently by Harold Rosenberg and this item is essentially the statement of Theorem~1.1 in~\cite{rose5} (also see \cite{mpr19}).

\begin{corollary} Let $N$ be an $n$-manifold with absolute sectional curvature less than or equal to 1 and let $M$ be a 2-sided $H$-hypersurface  with locally bounded second fundamental form in $N$. If any of the following items holds, then $M$ is properly embedded in $N$.
\ben
\item $n=3$ and for some $\ve>0$, $H^2\geq -S_N+\ve $, where $S_N$ is the average sectional curvature of $N$ (also called the scalar curvature of $N$). Note that if $S_N\geq \delta >0$, then this condition holds for all $H\geq 0$.
\item $n=4$ and $H>\frac{10}{9}$.
\item $n=5$ and $H>\frac{7}{4}$.
\item $n=3,4,5$, $N$ is flat and $H>0$.
\een
\end{corollary}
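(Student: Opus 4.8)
The plan is to derive the corollary from Proposition~\ref{proper}: in each of the four cases it suffices to show that every complete, stable, almost-embedded $H$-hypersurface $L$ in $N$ is compact with finite fundamental group. Suppose $L$ is such a hypersurface but is noncompact, or is compact with infinite fundamental group; in either case its universal cover is noncompact. I would first replace $N$ by its orientable double cover if necessary, then replace $L$ by its orientation double cover if $L$ is one-sided, and finally pass to the universal cover of the resulting two-sided hypersurface. Stability is preserved at each step, because strong stability of an $H$-hypersurface is equivalent to the existence of a positive solution $u$ of the Jacobi equation $\Delta u + (|A|^2 + \overline{\mathrm{Ric}}(\nu,\nu))u = 0$, and such a $u$ pulls back to a positive Jacobi solution on any cover. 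This reduces everything to the statement I would actually prove: there is no complete, noncompact, two-sided, stable $H$-hypersurface $L'$ immersed in a complete $n$-manifold $N'$ with $|\overline{K}_{N'}|\le 1$ (and flat when $N$ is flat), for $n=3,4,5$ under the respective hypotheses.

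For $L'$ one has the strong stability inequality $\int_{L'}|\nabla\phi|^2 \ge \int_{L'} q\,\phi^2$ for all $\phi\in C_c^\infty(L')$, where $q=|A|^2+\overline{\mathrm{Ric}}(\nu,\nu)$. The first ingredient is a positive pointwise lower bound for $q$: from Cauchy--Schwarz $|A|^2\ge (n-1)H^2$, and from $|\overline{K}_{N'}|\le 1$ one gets $\overline{\mathrm{Ric}}(\nu,\nu)\ge -(n-1)$, so $q\ge (n-1)(H^2-1)$; when $N'$ is flat this improves at once to $q=|A|^2\ge (n-1)H^2>0$. For $n=3$ I would instead substitute the Gauss equation into $q$ so as to express it in terms of $H^2$, the average sectional (scalar) curvature $S_{N'}$, and the intrinsic Gauss curvature $K_{L'}$; this, together with the two-dimensional conformal machinery below, is exactly what converts the analysis into the scalar-curvature hypothesis $H^2\ge -S_N+\ve$.

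A lower bound $q\ge c_0>0$ is not by itself enough, since it only forces the bottom of the $L^2$-spectrum $\lambda_0(L')\ge c_0$, which a complete noncompact manifold (hyperbolic space, say) may satisfy. The second ingredient is a curvature estimate: for $n\le 5$ a complete, two-sided, stable $H$-hypersurface in an $n$-manifold with $|\overline{K}|\le 1$ has uniformly bounded second fundamental form, via the iteration of Schoen, Simon and Yau built from Simons' identity and the stability inequality, and $n\le 5$ (that is, $\dim L'\le 4$) is precisely the range of dimensions in which that iteration closes; in dimension $3$ this is the classical argument of Fischer-Colbrie and Schoen and of do Carmo and Peng. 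Granting $|A|\le C$, the Gauss equation yields a uniform lower Ricci bound $\mathrm{Ric}_{L'}\ge -(n-2)\kappa^2$ with $\kappa$ explicit in $n$, $H$, $C$, and then an eigenvalue comparison of Cheng type gives $\lambda_0(L')\le \tfrac14 (n-2)^2\kappa^2$ because $L'$ is complete and noncompact. The thresholds $H>\tfrac{10}{9}$ for $n=4$ and $H>\tfrac{7}{4}$ for $n=5$ (and the scalar-curvature condition for $n=3$) are exactly what is needed to make $c_0>\tfrac14(n-2)^2\kappa^2$, which contradicts the completeness of $L'$; in the flat cases the ambient-curvature contributions drop out, the comparison reduces to an inequality between dimensional constants, and every $H>0$ works for $n=3,4,5$.

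The main obstacle is the curvature estimate and the bookkeeping attached to it: running the Schoen--Simon--Yau argument for \emph{constant} rather than zero mean curvature, in an ambient manifold merely assumed to have bounded sectional curvature rather than to be a space form, tracking the constants generated by Simons' identity and by the iteration, and verifying that the final comparison of $c_0$ with the Cheng bound genuinely holds once $H$ exceeds $10/9$, respectively $7/4$, in ambient dimension $4$, respectively $5$ — and genuinely fails for $n\ge 6$, where the iteration no longer closes.
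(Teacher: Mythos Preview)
Your reduction to Proposition~\ref{proper} is exactly the paper's route: the paper simply observes that under each hypothesis every complete stable almost-embedded $H$-hypersurface in $N$ is compact with finite fundamental group, cites \cite{enr1,lor2,rose4,fs1} for item~1 and \cite{chen1,enr1,lor2,rose4} for items~2--4, and then invokes Proposition~\ref{proper}. So at the level the paper actually argues, your proposal is correct and identical.

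Where you go beyond the paper is in sketching \emph{why} the cited compactness results hold, and here your mechanism for items~2 and~3 does not close. Your plan is: stability gives $\lambda_0(L')\ge c_0=(n-1)(H^2-1)$; a Schoen--Simon--Yau iteration gives $|A|\le C$; the Gauss equation then gives $\mathrm{Ric}_{L'}\ge -(n-2)\kappa^2$; and Cheng's comparison gives $\lambda_0(L')\le\tfrac14(n-2)^2\kappa^2$, a contradiction once $c_0>\tfrac14(n-2)^2\kappa^2$. The problem is that $\kappa$ depends on the constant $C$ coming out of the SSY iteration, and that constant is neither explicit nor small; there is no reason for the inequality $c_0>\tfrac14(n-2)^2\kappa^2$ to become true precisely at $H=\tfrac{10}{9}$ (for $n=4$) or $H=\tfrac74$ (for $n=5$). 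Those thresholds are not produced by an a~posteriori eigenvalue comparison; in the references the paper cites they arise from plugging test functions built out of the traceless second fundamental form $\phi=A-Hg$ (and cutoffs) directly into the stability inequality, combined with the Simons-type identity for $\Delta|\phi|^2$, and optimizing the resulting algebraic inequalities in the exponents --- the dimension restriction $n\le 5$ and the specific values $\tfrac{10}{9},\ \tfrac74$ fall out of that optimization. The same remark applies to your flat case: after rescaling to $H=1$ your scheme still needs $(n-1)>\tfrac14(n-2)^2\kappa^2$ with $\kappa$ governed by an implicit SSY constant, which you have not shown. If you want to supply the argument rather than cite it, you should follow the direct route in \cite{chen1,enr1} rather than the spectral-comparison detour.
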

\begin{proof} Any of the above listed conditions is  sufficient to guarantee that any stable $H$-hypersurface  in $N$ is compact with finite fundamental group. The condition in item 1 appears in~\cite{enr1,lor2,rose4} and the idea behind it originates from the techniques developed by Fischer-Colbrie and Schoen in~\cite{fs1} (see also~\cite{mpr19} for a complete discussion). See~\cite{chen1,enr1} for the conditions in items 2 and 3 and see \cite{chen1,enr1,lor2,rose4} for item 4.

 Applying Proposition~\ref{proper} completes the proof of the corollary.
\end{proof}

\section{The existence of a one-sided regular neighborhood.}\label{area}
In~\cite{mr7} Meeks and Rosenberg proved that a complete $H$-surface $M$ of bounded
second fundamental form and embedded in $\rth$
has a fixed size
regular neighborhood on its mean convex side (on both sides when $H=0$). In other words, for
such an $M$ there exists an $\ve>0$ such that for any $p\in M$, the
normal line segment $l_p$ of length $\ve$ based at $p$ and contained
on the mean convex side of $M$, forms a collection of pairwise disjoint embedded line segments. An immediate consequence of the existence of such a regular
neighborhood is that the surface is properly embedded and for some
$c>0$, the area of the surface is at most $cR^3$ in any ball of
radius $R\geq 1$.  The next lemma and theorem generalize this result in the case
that the mean curvature is positive.

\begin{lemma}
\label{thm1} Suppose $N$ is a complete $n$-manifold with absolute
sectional curvature bounded by a constant $S_0$ and with injectivity
radius at least $I_0>0$. Suppose $W\subset N$ is a proper smooth mean convex subdomain whose boundary $M$ has constant mean
curvature $H_0 >0$ and  $|A_M|\leq A_0$, where $|A_M|$ is the norm of the second fundamental form of $M$. Then
there exists a positive number $\tau\in(0,I_0)$, depending on $A_0, H_0, I_0, S_0,$ such
that $M$ has a regular neighborhood of width $\tau$ in $W$.
\end{lemma}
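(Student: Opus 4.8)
The plan is to set up the normal exponential map from $M$ into $W$ and show that it is injective on a tubular neighborhood of uniform width, the key being to separate a \emph{local} injectivity statement from a \emph{global} (self-intersection) one. For the local part, fix $p \in M$ and work in normal coordinates at $p$ in $N$; since $|A_M| \le A_0$ and the ambient curvature and injectivity radius are controlled by $S_0$ and $I_0$, the surface $M$ near $p$ is, after rescaling, a graph over its tangent plane with $C^2$ norm bounded in terms of $A_0$ and $S_0$ on an intrinsic ball of radius $r_0 = r_0(A_0,S_0,I_0)$. Flowing along the inward unit normal, the map $\Phi(q,t) = \exp_q(t\,\nu(q))$ has derivative controlled by the shape operator and the ambient Jacobi equation, so one gets a constant $\tau_1 = \tau_1(A_0,S_0,I_0) \in (0,I_0)$ such that $\Phi$ is a diffeomorphism from $B_M(p,r_0) \times [0,\tau_1)$ onto its image; here the mean-convexity orientation guarantees we are flowing into $W$ for small $t$, and the focal-point estimate from Jacobi-field comparison gives the quantitative $\tau_1$.

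The global part is to rule out that two far-apart pieces of $M$ have normal segments of length $\tau$ that collide inside $W$. This is where the hypothesis $H_0 > 0$ and $W$ mean convex does real work. Suppose the conclusion fails for every $\tau$: then there are points $p_n, q_n \in M$ with $\mathrm{dist}_M(p_n,q_n) \to \infty$ (or at least bounded below, by the local part) whose inward normal segments of length $\to 0$ meet. Pass to a limit: after translating so the collision point sits at the origin of a fixed normal coordinate chart and using the uniform $C^2$ bounds, subsequences of the two sheets converge in $C^2$ to two embedded $H_0$-disks $D, D'$ through a common point, each lying on the $W$-side, hence tangent there with $D$ locally on the mean-convex side of $D'$ or vice versa. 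The two disks are then ordered graphs over a common plane with the \emph{same} mean curvature $H_0$ when co-oriented as graphs, so the mean curvature (comparison) maximum principle forces $D = D'$. Tracing this equality back, $p_n$ and $q_n$ must in fact have been intrinsically close for large $n$, contradicting either $\mathrm{dist}_M(p_n,q_n)\to\infty$ or the local injectivity radius $r_0$ established above.

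The main obstacle I expect is making the limiting/compactness step fully quantitative rather than merely qualitative: a bare contradiction argument produces some $\tau > 0$ but not one depending only on $(A_0, H_0, I_0, S_0)$. To get the stated dependence I would instead argue directly: if the normal segments of length $\tau$ from $p$ and $q$ intersect, then at the intersection point the two sheets, written as graphs in a normal chart at that point, differ by a function $u$ solving a uniformly elliptic PDE (the difference of the two mean-curvature operators) of the type $L u = 0$ with $u \ge 0$, $u(0) = 0$, $|\nabla u(0)| \le C\tau$, whence by the boundary Harnack / Hopf estimate $u \equiv 0$ on a ball whose radius is bounded below in terms of the structure constants and $\tau$ — but $u \equiv 0$ on such a ball contradicts the two sheets being distinct pieces of the embedded surface $M$ once $\tau$ is smaller than a threshold $\tau_0(A_0,H_0,I_0,S_0)$. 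Taking $\tau = \min\{\tau_0, \tau_1\}$ finishes the proof. A secondary technical point is checking that for $t < \tau$ the segment $l_p$ genuinely stays inside the \emph{open} domain $W$ and does not exit through $M$ elsewhere; this follows from mean-convexity of $\partial W = M$ together with the first-variation estimate that pushes $M$ strictly inward under the normal flow for small positive time, uniformly in $p$.
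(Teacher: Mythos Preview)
Your overall strategy---separate a local focal-point estimate from a global sheet-separation estimate, the latter via a maximum principle---matches the paper's. The crucial step, however, is misidentified. When two nearby graphical sheets $u_1<u_2$ of $M$ bound a slab lying in $W$, the mean-convexity of $W$ forces their mean curvature vectors to point \emph{toward each other}; as graphs over a common tangent plane they carry mean curvatures $+H_0$ and $-H_0$, not both $+H_0$. This is precisely where $H_0>0$ does its work. The difference $v=u_2-u_1>0$ then satisfies an equation of the form $Lv=-2H_0$ for a uniformly elliptic $L$ with bounded coefficients, and a barrier such as $w=\alpha(\ve^2-|x|^2)$ gives $v(0)\ge c\,H_0\,\ve^2$, a quantitative separation depending only on $A_0,H_0,I_0,S_0$; this is essentially the paper's argument. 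With your equation $Lu=0$, no lower bound follows---Harnack gives only $\sup u\le C\inf u$, which is vacuous when both sides are small---and indeed the lemma fails for $H_0=0$ (the remark immediately following it gives the ideal hyperbolic triangle as counterexample).

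Your direct Hopf/Harnack paragraph has a further confusion: since $M$ is embedded the two sheets are disjoint, so $u>0$ everywhere and there is no interior point with $u(0)=0$; what is small is the value of $u$ near the center (of order $\tau$), and Hopf does not apply. Your resolution of the ``secondary technical point'' is also circular: that mean-convexity pushes $M$ strictly inward for small time does not by itself give a \emph{uniform} time before the normal segment can meet $M$ again---that uniform time is exactly what the lemma asserts, and it comes from the oppositely-oriented-sheets estimate above.
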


\begin{remark} {\rm
Since a geodesic triangle $W$ in the hyperbolic plane with 3 vertices at infinity does not have a fixed size regular neighborhood in $W$ for its boundary,  we see that Lemma~\ref{thm1} does not generalize to the case when the mean curvature vanishes.}
\end{remark}

\begin{proof}
The uniform
bound $|A_M|\leq A_0$ together with the bounds given by $I_0$ and
$S_0$ implies that there exists an $\ve>0$ sufficiently small so that
for any $p\in M$, every component of $B_{N}(p,2\ve)\cap M$ is a
graph over its projection to the disk of radius $3\ve$ in $T_pM$;
here we are considering the tangent plane to be a plane in normal
coordinates and for the orthogonal projection map to be well defined
in these coordinates. Moreover, we can choose $\ve$ sufficiently
small and a smaller positive $\de$, depending on the bounds $A_0,
I_0, S_0$, such that every component of $B_{N}(p,2\ve)\cap M$
intersecting $B_{N}(p,\de)$ contains a graph over $D(p,\ve)$,
that is the disk of radius $\ve$ in $T_pM$. Also, we
may assume that in our coordinates, these graphs all have
gradient of norm less than 1.

The theorem will follow from the observation that two disjoint
graphs, $u_1$ and $u_2$ over $D(p,\ve)$ with bounded gradients and of constant mean curvature
$H_0$ which are oppositely oriented and such that $u_2$ lies on the
mean convex side of $u_1$, cannot be too close at their centers (this is essentially a consequence of the maximum principle for quasi-linear uniformly elliptic PDEs). Let
$p\in M$ and let $u_1\subset M$ be the graph over $D(p,\ve)$
containing $p$. Let $\de_1\in(0,\de)$ and suppose that
$B_N(p,\de_1)\cap M$ contains a connected component different from
$u_1\cap B_N(p,\de_1)$ and which lies on the mean convex side of
$u_1$. Let $u_2\subset M$ be the graph over $D(p,\ve)$ which
contains such a component and choose it to be the closest one.
In other words, the region between the graphs is contained in $W$ and so,
since $W$ is mean convex, $u_2$ lies on the
mean convex side of $u_1$ and they are oppositely oriented.
This contradicts our observation, if $\de_1$ is chosen sufficiently small.
 Hence, when
$\de_1$ is sufficiently small, $B_N(p,\de_1)$ does not intersect $M$
on the mean convex side of $u_1$. Letting $\tau$ be such a small
$\de_1$ proves the lemma.
\end{proof}

% in the sense that there exist arbitrarily small
%$C^1$ perturbations of $M$ which are embedded; in particular, such
%surfaces only self-intersect tangentially. Although we did not state
%Theorem~\ref{thm1} for properly immersed $H$-surfaces which are
%almost-embedded, its proof can be easily modified to this more
%general situation. Recall that complete simply-connected manifolds with sectional curvature
%bounded from above by $S_0\geq0$ have positive injectivity radius at least $\frac{\pi}{S_0}$ and so, in the statement of the next corollary
%the assumption that $M$ is simply-connected replaces our hypothesis on positive injectivity radius
%given in Theorem~\ref{thm1}.

\begin{definition}{\rm
 We call a proper, almost-embedded $H$-hypersurface $f\colon M\to N$ {\em strongly Alexandrov embedded} if there exists a proper immersion $F\colon W\to N$ of a mean convex 3-manifold $W$ with $\partial W = M$, which extends the map $f$ and which is injective on the interior of $W$. }

\end{definition}

\begin{remark} \label{remark} {\rm Note that if $M$ is a connected, properly embedded and separating $H$-hypersurface in an $n$-manifold $N$, then it is strongly Alexandrov embedded.   }

\end{remark}

\begin{theorem}[One-sided Regular Neighborhood]
\label{cor*}  Suppose $N$ is a complete $n$-manifold with absolute
sectional curvature bounded by a constant $S_0$. Let $M$ be a strongly Alexandrov embedded hypersurface with constant mean
curvature $H_0 >0$ and  $|A_M|\leq A_0$. Then the following
statements hold.
\begin{enumerate}
\item There exists a positive number $\tau\in\left(0,\frac{\pi}{S_0}\right)$, depending on $A_0$, $H_0$, $S_0,$ such
that $M$ has a regular neighborhood of width $\tau$ on its mean convex side.
\item There exists a positive number $C$, depending on $A_0$, $H_0$, $S_0,$
such that  the $(n-1)$-dimensional volume of $M$ in balls of
radius $1$ in $N$ is less than $C$. \end{enumerate}
\end{theorem}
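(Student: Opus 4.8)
The plan is to reduce Part~1 to the argument already used for Lemma~\ref{thm1} — carried out not in $N$ but in the mean convex manifold furnished by the strongly Alexandrov hypothesis — and then to obtain Part~2 from Part~1 by a standard Fermi-coordinate volume comparison. First I would replace the ambient manifold by one with boundary. By hypothesis there is a proper immersion $F\colon W\to N$ of a mean convex manifold $W$ with $\partial W=M$ extending $f$ and injective on $\mathrm{int}(W)$. Equip $W$ with the pulled-back metric $\bar g=F^{*}g_N$, so that $F$ becomes a Riemannian immersion: then $(W,\bar g)$ has absolute sectional curvature at most $S_0$, its boundary $M=\partial W$ has constant mean curvature $H_0$ and $|A_M|\le A_0$, and $M$ is mean convex as $\partial W$. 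Moreover $(W,\bar g)$ is complete, since a proper immersion into a complete manifold pulls back a complete metric (a Cauchy sequence maps to a Cauchy sequence, whose limit has a compact neighbourhood with compact preimage). Because $F$ is injective on $\mathrm{int}(W)$, any family of pairwise disjoint embedded inward normal segments of $\partial W$ in $W$ is carried by $F$ to a regular neighbourhood of $M$ in $N$ on its mean convex side. So it suffices to produce $\tau=\tau(A_0,H_0,S_0)\in(0,\pi/S_0)$ such that the inward normal segments of $\partial W$ of length $\tau$ are pairwise disjoint in $W$.

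The one place the proof of Lemma~\ref{thm1} used the ambient injectivity radius $I_0$ was to endow a ball $B_N(p,2\ve)$ with normal coordinates; I would instead manufacture the needed chart by hand, which is where the weaker hypothesis costs something. Fix $p\in M$, put $q=F(p)$, and pick $\rho_0<\pi/\sqrt{S_0}$; since $K_N\le S_0$ the map $\exp^{N}_{q}\colon B(0,\rho_0)\subset T_qN\to N$ has no conjugate points, so $h=(\exp^{N}_{q})^{*}g_N$ is a metric on $B(0,\rho_0)$ with $|K_h|\le S_0$ whose geodesics through the centre are the Euclidean rays; hence $(B(0,\rho_0),h)$ has injectivity radius $\ge\rho_0$ at $0$ and, on $B(0,\ve)$ with $\ve=\ve(A_0,S_0)$ small, normal coordinates comparable to Euclidean ones in which every sheet of the lift of $M$ near $0$ is a graph of gradient $<1$ over the disk $D(p,\ve)\subset T_pM$. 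Lifting $F$ through $\exp^{N}_{q}$ along short paths in $W$ issuing from $p$, the relevant bounded pieces of $M$ and of its mean convex side $W$ transfer to this chart, and there the whole local analysis of Lemma~\ref{thm1} runs with constants depending only on $A_0,H_0,S_0$. Concretely, if no admissible $\tau$ existed, then for every small $\de_1>0$ there would be $p\in M$ and $x\in M$ on the mean convex side of the sheet $u_1\ni p$, at $\bar g$-distance $<\de_1$ from $p$, with a nonempty piece of $W$ between $u_1$ and the sheet through $x$. In the chart, letting $u_2$ be the sheet through $x$ closest to $u_1$, we get disjoint graphs $u_1,u_2$ over $D(p,\ve)$ of constant mean curvature $H_0$, oppositely oriented as pieces of $\partial W$, with $u_2$ on the mean convex side of $u_1$ and $0<u_2(p)-u_1(p)<2\de_1$. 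As in Lemma~\ref{thm1}, two oppositely oriented $H_0$-graphs curve away from one another, so the maximum principle for the quasilinear $H_0$-equation forces $u_2(p)-u_1(p)\ge c(A_0,H_0,S_0)>0$, contradicting $\de_1<c/2$. Taking $\tau:=\tfrac12 c$, then shrinking it below the focal distance of $M$ (which $|A_M|\le A_0$ and $|K_N|\le S_0$ bound from below by a Jacobi-field comparison) and below $\pi/S_0$, shows the length-$\tau$ inward normal segments of $M$ never return to $M$; the same mechanism, together with the obvious disjointness of nearby normal segments over the nearly flat sheet $u_1$, gives that they are pairwise disjoint. By the reduction this is Part~1.

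For Part~2, let $\Phi\colon M\times[0,\tau]\to N$ be the normal exponential map on the mean convex side. By Part~1 it is injective, and since $\tau$ is below the focal distance of $M$ its differential is nonsingular, so $\Phi$ is an embedding whose Jacobian is bounded below by some $c_1=c_1(A_0,S_0,\tau)>0$ (Jacobi-field comparison again). Hence, for every $x\in N$,
\[
c_1\,\tau\,\mathrm{Vol}_{n-1}\left(M\cap B_N(x,1)\right)\ \le\ \mathrm{Vol}_{n}\left(\Phi\left((M\cap B_N(x,1))\times[0,\tau]\right)\right)\ \le\ \mathrm{Vol}_{n}\left(B_N(x,1+\tau)\right),
\]
and since $K_N\ge-S_0$ gives $\mathrm{Ric}_N\ge-(n-1)S_0$, the Bishop--Gromov inequality bounds the last quantity by a constant depending only on $n,S_0,\tau$. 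Combining, $\mathrm{Vol}_{n-1}(M\cap B_N(x,1))\le C(A_0,H_0,S_0)$.

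The step I expect to be the main obstacle is the construction of the uniform local chart together with the bookkeeping in the maximum-principle step: without an ambient injectivity-radius bound one must check that a genuine failure of the regular neighbourhood really produces the configuration of two oppositely oriented $H_0$-graphs over a common disk, one on the mean convex side of the other, rather than, say, the normal geodesic closing up or $M$ reappearing only through a sheet on the wrong side. It is precisely the presence of the mean convex manifold $W$, with $F$ injective on its interior, that makes this go through and lets the boundary-of-a-mean-convex-manifold analysis of Lemma~\ref{thm1} do the rest.
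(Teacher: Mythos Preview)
Your argument is essentially correct, but it follows a genuinely different route from the paper's. The paper does \emph{not} try to manufacture local charts in $N$; instead it passes to the Riemannian universal cover $\Pi\colon\widetilde N\to N$, where the Cartan--Hadamard/Rauch bound gives a global injectivity radius $\ge \pi/\sqrt{S_0}$, takes a component $\widetilde W$ of $\Pi^{-1}(W)$, and applies Lemma~\ref{thm1} verbatim to $\partial\widetilde W\subset\widetilde N$. The substantive new step is then a covering-transformation argument (their Assertion): any deck transformation $g$ with $g(\widetilde W)\cap\widetilde W\ne\varnothing$ must satisfy $g(\widetilde W)=\widetilde W$, hence preserves $d_{\widetilde W}(\,\cdot\,,\partial\widetilde W)$, and this forces the normal segments $\gamma_x$ to project injectively and pairwise disjointly to $N$. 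Your reduction ``disjoint normal segments in $(W,\bar g)\Rightarrow$ regular neighbourhood in $N$'' plays the role of that Assertion, but you then prove disjointness in $W$ by a local lifting through $\exp^N_q$ rather than by a global lift.

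What each approach buys: the paper's route cleanly separates concerns --- Lemma~\ref{thm1} is used as a black box in $\widetilde N$, and the descent is pure covering-space algebra --- so nothing delicate about monodromy of local lifts ever arises. Your route is more self-contained and treats the almost-embedded case uniformly (the paper explicitly restricts to embedded $M$ and waves at ``minor modifications''), but the price is exactly the issue you flag: the lift of $F$ through $\exp^N_q$ is only a local diffeomorphism, so one must check path-independence of the lift on the relevant pieces. This is fine because the sheets $u_1,u_2$ and the slab between them are simply connected, but it is bookkeeping the paper's global lift avoids entirely. Your Part~2 is the same volume-comparison as the paper's, stated a bit more precisely via Bishop--Gromov.
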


\begin{proof} We will first prove the existence of a one-sided regular
neighborhood of $M$ in $N$ with width a certain $\tau\in\left(0,\frac{\pi}{S_0}\right)$. We will prove item 1 in the case when $M$ is embedded; with minor modifications, the same arguments demonstrate the case when $M$ is almost-embedded.

Let $\Pi\colon \wt{N}\to N$ be the universal cover of $N$. Since $M$ is strongly Alexandrov embedded, there is a proper subdomain $W\subset N$ with $\partial W=M$ that lies on the mean convex side of $M$. Without loss of generality, we can assume that $W$ is connected. Let $\wt{W}\subset \wt{N}$ be one of the connected components of $\Pi^{-1}(W)$. Note that $\partial \wt{W}$ is mean convex with respect to the pull-back metric. Furthermore, it consists  of a collection of surfaces $\{\wt{M}_{\a}\}_{\a\in I}$ such that for each $\a\in I$, the restriction $\Pi\colon \wt{M}_{\a}\to M$ is a covering space.

Recall that any complete simply-connected manifold with sectional curvature bounded from above by $S_0\geq 0$ has positive injectivity radius at least $\frac{\pi}{S_0}$. Therefore, by Lemma~\ref{thm1}, there exists a one-sided regular $\tau$-neighborhood, $N(\partial \wt{W},\tau)$ for $\partial \wt{W}$, where $\tau\in\left(0,\frac{\pi}{S_0}\right)$ depends on $A_0, H_0$ and $S_0$. Note that for any point $p\in N(\partial \wt{W},\tau)-\partial  \wt{W}$, there exists a unique geodesic in $N(\partial \wt{W}, \tau)$ joining $p$ to $\partial \wt{W} $ of least length.

For each $x\in \partial \wt{W}$, let $\g_x \subset \wt{W}$ be the unit speed geodesic starting at $x$, perpendicular to $\partial \wt{W}$ and parameterized on $[0,\tau)$.

\begin{assertion} \label{assertion}
For each  $x, y \in \partial \wt{W}$ the following hold.
\ben
\item $\Pi (\g_x)\cap M=\Pi (x)$ and $\Pi(\g_x(0,\tau))\cap M = \O$.
\item  $\Pi |_{\g_{x}}$ is injective.
\item  If $\Pi(x)\not=\Pi(y)$, then $\Pi(\g_x)\cap \Pi(\g_y)=\O.$
\een

\end{assertion}

\begin{proof} Item 1 is a consequence of the fact that $\g_x(0,\tau)\subset {\rm Int}(\wt{W})$ and $\Pi({\rm Int}(\wt{W}))={\rm Int}(W)$.

In order to prove items 2 and 3, we first note that if $g\colon\wt{N}\to \wt{N}$ is a covering transformation and $g(\wt{W})\cap\wt{W}\not=\O$, then $g(\wt{W})=\wt{W}$. To see this suppose that $p\in g(\wt{W})\cap \wt{W}$. Since $g\colon \wt{N}\to \wt{N}$ is an isometry, $g\colon \Pi^{-1}(M)\to \Pi^{-1}(M)$ and $\partial \wt{W}$ is mean convex, we may pick $p$ to be in the interior of $\wt{W}\cap g(\wt{W})$. We first show that $\wt{W}\subset g(\wt{W})$. Let $q\in \wt{W}$ and choose a path $\g\subset \wt{W}$ from $p$ to $q$ and such that $\g-q\subset {\rm Int}(\wt{W})$. Note that $\g$ can only intersect $\Pi^{-1}(M)$ at $q$ and only if $q\in \partial \wt{W}$; hence $\g-q$ is disjoint from $\partial (g(\wt{W}))$. Since $p$ is in the interior of $g(\wt{W})$ and $\g-q$ is disjoint from $\partial (g(\wt{W}))$, then $\gamma \subset g(\wt{W})$ and thus $q\in g(\wt{W})$, which proves that $\wt{W}\subset g(\wt{W})$. Note that proving $g(\wt{W})\subset \wt{W}$ is equivalent to proving that $\wt{W}\subset g^{-1}(\wt{W})$. Applying $g^{-1}$ to $g(\wt{W})\cap \wt{W}\not=\O$ gives $\wt{W}\cap g^{-1}(\wt{W})\not=\O$. Arguing as previously, $\wt{W}\subset g^{-1}(\wt{W})$ and this proves that $\wt{W}=g(\wt{W})$.

We  now prove item 2. Suppose that there exists $x\in \partial \wt{W}$ for which $\Pi|_{\g_{x}}$ is not injective. Then there exist points $a,b\in \g_x$, $a\not= b$, such that $\Pi(a)=\Pi(b)$. Since $\wt{N}$ is the universal covering space of $N$, there exists a nontrivial covering transformation $g\colon \wt{N}\to \wt{N}$ such that $g(a)=b$. In particular,  $g(\wt{W})\cap \wt{W}\not=\O$ and by the previous observation, $g(\wt{W})=\wt{W}$. In other words $g|_{\wt{W}}\colon \wt{W}\to \wt{W}$ is an isometry and $$d_{\wt{W}}\left(a, \partial \wt{W}\right)=d_{\wt{W}}\left(g(a), \partial \wt{W}\right)=d_{\wt{W}}\left(b, \partial \wt{W}\right).$$ However, different points in $\g_x$ have different distances to $\partial \wt{W}$ and thus $$d_{\wt{W}}\left(a, \partial \wt{W}\right)\not= d_{\wt{W}}\left(b, \partial \wt{W}\right).$$ This contradiction finishes the proof of item 2.

The proof of item 3 is similar. Let $x,y\in \partial \wt{W}$ such that $\Pi(x)\not=\Pi(y)$. If $\, \Pi(\g_x)\cap \Pi(\g_y)\not=\O$, then there exist $a\in \g_x$ and $b\in \g_y$ such that $\Pi(a)=\Pi(b)$. As before, there exists a nontrivial covering transformation $g\colon \wt{N}\to \wt{N}$ inducing an isometry $g|_{\wt{W}}\colon \wt{W}\to \wt{W}$ and such that $g(a)=b$. Since $g|_{\wt{W}}$ preserves distances to $\partial \wt{W}$ and $g(a)=b$, then $g(\g_x)=\g_y$ and so $g(x)=y$, which implies $\Pi(x)=\Pi(y)$. This contradiction finishes the proof of item 3 and the proof of the assertion.
\end{proof}

It follows from Assertion \ref{assertion} that as $x\in M$ varies, the geodesic segments  $\g_x\subset W$ starting at $x$ and perpendicular to $M$ of length $\tau$ are embedded and form a pairwise disjoint collection. This proves the first statement of the theorem.

Let $N(M,\tau)$ be the one-sided regular neighborhood of $M$ in $N$
given by item {1}. For a domain $E\subset M$, let $N(E,\tau)\subset
N(M,\tau)$ be the associated one-sided regular neighborhood in $W\subset N$. The uniform bound on $|A_M|$ and on the absolute
sectional curvature of $N$  implies that there exists a constant $K$, depending also on $\tau$, such that the $(n-1)$-dimensional
volume of any compact domain $E$ on $M$ is less than $K$ times the
volume of $N(E,\tau)$. Thus, since the volumes of balls in $N$ of radius
$2$ are uniformly bounded by a constant depending only on $S_0$, the $(n-1)$-dimensional volume of $M$ in
such balls is also uniformly bounded. In other words, for such a $\tau$ and for any $p\in M$,
\[
\begin{split}
{\rm Volume}_{n-1}[M\cap B_N(p,1)]&\leq K \cdot {\rm Volume}_n [N(M\cap B_N (p,1),\tau)]\\&\leq K \cdot {\rm Volume}_n [B_N (p,2)]\leq C
\end{split}
\]
where $C$ depends solely $A_0$, $S_0$ and $H_0$. This proves item {2}, and thus
completes the proof of the theorem.
\end{proof}

Our proof of the Dynamics Theorem in~\cite{mt1}  uses the
following corollary to Theorem~\ref{cor*}.

\begin{corollary} Suppose $N$ is a complete, simply-connected $n$-manifold with absolute
sectional curvature bounded by a constant $S_0$. Let $M$ be a connected, properly immersed, almost-embedded hypersurface with constant mean
curvature $H_0 >0$ and  $|A_M|\leq A_0$. Then the following
statements hold.
\begin{enumerate}
\item There exists a positive number $\tau\in\left(0,\frac{\pi}{S_0}\right)$, depending on $A_0$, $H_0$, $S_0,$ such
that $M$ has a regular neighborhood of width $\tau$ on its mean convex side.
\item There exists a positive number $C$, depending on $A_0$, $H_0$, $S_0,$
such that  the $(n-1)$-dimensional volume of $M$ in balls of
radius $1$ in $N$ is less than $C$. \end{enumerate}
\end{corollary}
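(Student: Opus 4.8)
The plan is to deduce the corollary from Theorem~\ref{cor*} by verifying that, under the stated hypotheses, $M$ is strongly Alexandrov embedded; once this is established, both conclusions follow word for word from Theorem~\ref{cor*}, whose constants already depend only on $A_0,H_0,S_0$. Thus the entire task reduces to producing a proper immersion $F\colon W\to N$ of a mean convex $n$-manifold $W$ with $\partial W=M$, extending $f$ and injective on ${\rm Int}(W)$. This is the analogue of Remark~\ref{remark} for the almost-embedded case, with the hypothesis that $M$ be separating replaced by $\pi_1(N)=0$.

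First I would observe that, since $M$ is almost-embedded and $H_0>0$, the mean curvature vector provides a global unit normal field along $f$; hence $M$ is two-sided and each local sheet of $M$ has a well-defined mean convex side. At a point $x_0=f(p)=f(q)$ where $f$ is not injective, the two local sheets are tangent and lie on opposite sides of one another, and a standard application of the Hopf boundary-point maximum principle for the quasilinear $H_0$-equation shows that their two mean convex sides point \emph{away} from each other at $x_0$; in particular the region locally ``between'' the two sheets is never a mean convex side. Consequently the union over $p\in M$ of the local mean convex half-neighborhoods is an open subset of $N\setminus f(M)$ which chains together along the connected surface $M$, hence lies in a single component $W_0$ of $N\setminus f(M)$. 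Let $W$ be the metric completion of $W_0$ in the induced path metric and $F\colon W\to N$ the natural extension of the inclusion $W_0\hookrightarrow N$. By the previous observation the local model of $W_0$ near a self-tangency of $M$ is a disjoint pair of one-sided half-neighborhoods (the two sheets thickening in opposite directions), so $W$ is a smooth manifold with boundary, $F$ is a proper immersion injective on ${\rm Int}(W)=W_0$, $F$ restricts to $f$ on $\partial W$, and $\partial W$ is mean convex.

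The only step that uses $\pi_1(N)=0$ is the identity $\partial W=M$, i.e.\ that $W_0$ does not also accumulate onto a sheet of $M$ from its concave side. If it did, there would be a point $p\in M$ whose mean convex and concave half-neighborhoods both lie in $W_0$; joining points of these two half-neighborhoods by a path in $W_0$ and closing it up through $p$ yields a loop in $N$ meeting $f(M)$ transversally in exactly one point, hence a loop pairing nontrivially with $[M]\in H_{n-1}(N;\Z_2)$, which is impossible when $N$ is simply connected. Therefore $\partial W=M$, the map $f=F|_{\partial W}$ extends to $F\colon W\to N$ as above, and $M$ is strongly Alexandrov embedded.

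Finally I would invoke Theorem~\ref{cor*} for this $W$: statement~1 yields a regular neighborhood of $M$ of some width $\tau\in\left(0,\frac{\pi}{S_0}\right)$ on its mean convex side, and statement~2 yields the uniform $(n-1)$-dimensional volume bound $C$ on unit balls, both with constants depending only on $A_0,H_0,S_0$. I expect the main technical obstacle to be precisely the verification in the second paragraph that the metric completion $W$ is a genuine smooth manifold with boundary equal to $M$ and that $F$ is proper — in particular the description of $W$ near the self-intersection locus of $M$, where $W$ acquires two boundary prongs mapping to the two sheets through a common point; the two-sidedness of $M$, the separation argument from $\pi_1(N)=0$, and the appeal to Theorem~\ref{cor*} are then routine.
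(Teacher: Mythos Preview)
Your proposal is correct and follows exactly the paper's strategy: verify that $M$ is strongly Alexandrov embedded and then invoke Theorem~\ref{cor*}. The paper's own proof is a two-line argument (``a connected, properly embedded hypersurface in a simply-connected $N$ separates $N$ into two components, hence $M$ is strongly Alexandrov embedded; now apply Theorem~\ref{cor*}''), whereas you supply considerably more detail on the construction of $W$ in the genuinely almost-embedded case and on the separation/intersection-number argument using $\pi_1(N)=0$; these details are sound and in fact fill in points the paper leaves implicit.
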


\begin{proof}
Since a connected, properly embedded surface in $N$ separates $N$ into two connected components, a surface $M$ satisfying the hypotheses of the corollary is seen to be strongly Alexandrov embedded. Applying Theorem~\ref{cor*} finishes the proof.
\end{proof}

\section{The existence of $H$-surfaces in ${\mathbb R}^3$ which are not properly embedded.}\label{slab}

In~\cite{gb1}, Grosse-Brauckmann  constructed a one-parameter family
$M(t)$, $0<t<\infty$, of properly embedded doubly periodic
$H$-surfaces with the $(x_1,x_2)$-plane $P$ as a plane of Alexandrov
symmetry and varying constant mean curvature $H_{M(t)}>0$. This
family contains the doubly periodic surface  $\xi_{2,2}$
of Lawson~\cite{la1}. The rescaled surfaces $\h{M}(t)=H_{M(t)}M(t)$
have constant mean curvature 1 and converge pointwise to $P$ with area
multiplicity two as $t\to 0$. In particular, for $n\in\N$, there exists a
properly embedded surface $M_n\subset \rth$ with constant mean
curvature $1$ and contained in the slab
$\{(\frac{1}{2})^{n+1}<x_3<(\frac{1}{2})^n\}$. The disconnected
surface $M_{\infty}=\bigcup_{n\in \Nsmall} M_n$ is a complete $1$-surface
properly embedded  in the slab $\{0<x_3<\frac{1}{2}\}$ which is
not properly embedded in $\rth$. A simple modification of this argument proves the following proposition when $H>0$ and when $H=0$, the proposition follows trivially by using an appropriate infinite collection of horizontal
planes in the slab.

\begin{proposition}
\label{nonproper} For any  $H\geq0$, there exists a complete,
disconnected $H$-surface properly embedded in the slab
$\{0<x_3<\frac{1}{2}\}$, which is not properly embedded in $\rth$.
\end{proposition}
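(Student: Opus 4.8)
The plan is to reduce the case $H>0$ directly to the Grosse-Brauckmann family $M(t)$ discussed above, and to handle $H=0$ by the trivial horizontal-plane construction already indicated. For $H>0$, I would first fix the target slab and rescale: given $H>0$, set $\widehat M(t)=H_{M(t)}M(t)$, which has constant mean curvature $1$ and converges to the plane $P$ with area multiplicity two as $t\to 0$. Rescaling by $1/H$ produces, for each small $t$, a properly embedded surface of constant mean curvature exactly $H$ contained in a thin horizontal slab whose thickness tends to $0$ as $t\to 0$. Since the surface has Alexandrov symmetry across $P$, it is contained in a symmetric slab $\{-\eta(t)<x_3<\eta(t)\}$ (after a vertical translation, in a slab $\{a_n<x_3<b_n\}$ of thickness $b_n-a_n\to 0$).

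Next I would choose, for each $n\in\N$, a parameter $t_n$ small enough that the corresponding rescaled surface fits, after a vertical translation, inside the slab $S_n=\{(\tfrac12)^{n+1}<x_3<(\tfrac12)^n\}$; call this translated surface $M_n$. Because the $S_n$ are pairwise disjoint, the union $M_\infty=\bigcup_{n\in\N}M_n$ is an embedded surface, and it is complete since each $M_n$ is complete (it is doubly periodic, hence complete). It has constant mean curvature $H$ everywhere. It is properly embedded in the open slab $\{0<x_3<\tfrac12\}$: any compact subset $K$ of that slab meets only finitely many of the $S_n$, hence only finitely many $M_n$, and on each of those $M_n\cap K$ is compact because $M_n$ is properly embedded in $\rth$ (indeed in $S_n$). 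Finally, $M_\infty$ fails to be properly embedded in $\rth$: the origin $0\in\rth$ is a limit point of $M_\infty$ (pick one point $p_n\in M_n$ for each $n$, so $x_3(p_n)\to 0$ and, choosing $p_n$ on the lowest part of each $M_n$, the whole sequence converges to a point of $P$, which we may arrange to be $0$ by the double periodicity and a horizontal translation), yet $0\notin M_\infty$ since $M_\infty\subset\{x_3>0\}$; so $\overline{M_\infty}\neq M_\infty$ in $\rth$.

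For $H=0$, take $M_\infty=\bigcup_{n\in\N}\{x_3=(\tfrac12)^{n+1}\}\cap\{0<x_3<\tfrac12\}$; this is a disjoint union of horizontal planes, a complete minimal surface, properly embedded in the open slab by the same finiteness argument, and not properly embedded in $\rth$ because $P=\{x_3=0\}$ is a plane of limit points disjoint from $M_\infty$.

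The main point requiring care — and the only genuine obstacle — is verifying that the slab containing the rescaled Grosse-Brauckmann surface $\widehat M(t)/H$ really does shrink to zero thickness as $t\to 0$; this is exactly the content of the stated convergence of $\widehat M(t)$ to $P$ as $t\to 0$, so it is legitimate to invoke it, but one should be slightly careful that convergence to $P$ (pointwise, with multiplicity two) does imply $C^0$-closeness of the surface to $P$ on the relevant fundamental domain and hence containment in an arbitrarily thin symmetric slab; this follows from the doubly periodic structure (so that $C^0$-control on one period controls everything) together with the Alexandrov-symmetry across $P$. Everything else — embeddedness of the union, completeness, properness in the open slab, and non-properness in $\rth$ — is then routine.
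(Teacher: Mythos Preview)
Your proposal is correct and follows essentially the same route as the paper: stack rescaled and translated members of the Grosse-Brauckmann family into the disjoint dyadic sub-slabs $S_n$ for $H>0$, and use horizontal planes for $H=0$. Your write-up is in fact more detailed than the paper's, which gives the argument only for $H=1$ and then says ``a simple modification'' handles general $H>0$; your extra care about why the union is proper in the open slab and why the slab thickness can be made arbitrarily small is appropriate and matches what the paper leaves implicit.
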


\center{William H. Meeks, III at profmeeks@gmail.com}\\
Mathematics Department, University of Massachusetts, Amherst, MA
01003
\center{Giuseppe Tinaglia at giuseppe.tinaglia@kcl.ac.uk}\\ Department of Mathematics, King's College London, Strand, London, WC2R 2LS, U.K.

\bibliographystyle{plain}
\bibliography{bill}

\end{document}